\newcommand{\commentt}[2]{#1}
\title{Exact and efficient simulation of Gaussian vectors via PCA}
\title{Simulating Gaussian vectors via randomized dimension reduction and PCA}
\author{Nabil Kahal\'e
\thanks{\emph{ESCP Business School, 75011 Paris,
France; \commentt{e-mail: }{\email}{nkahale@escp.eu}.}}
}
\newcommand{\citet}{\citeasnoun}
\begin{document}

\newtheorem{example}{Example}[section]
\newtheorem{theorem}{Theorem}[section]
\newtheorem{conjecture}{Conjecture}[section]
\newtheorem{lemma}{Lemma}[section]
\newtheorem{proposition}{Proposition}[section]
\newtheorem{remark}{Remark}[section]
\newtheorem{corollary}{Corollary}[section]
\newtheorem{definition}{Definition}[section]
%\numberwithin{equation}{section}
%\numberwithin{proposition}{section}

%\newenvironment{proof}{{\bf Proof: }}{\vskip0.3in}%{\framebox[\totalheight]{\space }}}
\newcommand{\comment}[1]{}
\newcommand{\supp}[1]{\text{Supp}({#1})}
\newcommand{\vect}[1]{{\rm Vect}({\bf #1})}
\newcommand{\tr}{{\rm tr}}
\newcommand{\cov}{{\rm Cov}}
\newcommand{\E}{{\rm E}}
\newcommand{\var}{{\rm Variance}}
\newcommand{\diag}{{\rm diag}}
\newcommand{\std}{{\rm Std}}
 \maketitle
 %\begin{center}{Draft. Not for distribution.}\end{center}
\begin{abstract}
We study the problem of estimating \(E(g(X))\), where \(g\) is a real-valued function of \(d\) variables and \(X\) is a \(d\)-dimensional Gaussian vector with a given covariance matrix. We present a new unbiased estimator for  \(E(g(X))\) that   combines the randomized dimension reduction technique with principal components analysis. Under suitable conditions, we prove that our algorithm outperforms  the standard Monte Carlo method by a factor of order \(d\).
\end{abstract}
Keywords: Cholesky factorisation, Gaussian vectors, Principal components analysis, Monte Carlo simulation, variance reduction, dimension reduction
%{Subject classifications:} Finance: Asset pricing, Securities. Programming:
%Infinite dimensional. Mathematics: Convexity.
\section{Introduction}
Monte Carlo simulation of  Gaussian vectors  is commonly used in  a  variety of fields such
as
 weather and spatial prediction \cite{gel2004,diggle2007springer}, finance~\cite{glasserman2004Monte,Hull14}, and machine learning~\cite{russo2014learning,russoBVR2016}. Let \(M\) be a  \(d\times d\)   positive semi-definite matrix, and let \(X\) be a  \(d\)-dimensional
Gaussian vector with   mean \(0\) and covariance matrix \(M\).   A standard technique to simulate \(X\)  is Cholesky factorization \cite[Subsection 2.3.3]{glasserman2004Monte}, that uses a \(d\times d\) matrix  \(A\)    such that\begin{equation}\label{eq:aat}
AA^{T}=M.
\end{equation} 
Such a matrix  \(A\) \comment{is called a Cholesky factorization if  \(A\) is lower triangular. If \(V\) is positive definite, a Cholesky factorization exists and}can be computed in \(O(d^{{3}})\) time \cite[Subsections 4.2.5 and 4.2.8]{golub2013matrix}. More efficient  methods  simulate  Gaussian vectors 
in special cases. In particular, Fast Fourier transforms techniques exactly simulate Gaussian processes with a stationary covariance matrix  \cite{WoodChan94,dietrich1997fast}   and fractional Brownian surfaces \cite{stein2002fast}. Sparse Cholesky factorization~\cite{rue2001fast} and iterative techniques~\cite{aune2013iterative} efficiently generate
 Gaussian vectors with a sparse  precision matrix \(M^{-1}\).
For a general covariance matrix \(M\), \(X\) can be approximately simulated via a Markov chain algorithm \cite{kahaleGaussian2019} that   asymptotically converges to \(N(0,M)\) and does not require any precomputations. \citet{kahale2022unbiased} describes an unbiased version of the algorithm.

Variance reduction techniques that improve the performance of Monte Carlo simulation have been developed in various settings \cite{glasserman2004Monte,asmussenGlynn2007,giles2015multilevel}.
The quasi-Monte Carlo method (QMC), a technique related to Monte Carlo simulation, outperforms standard Monte Carlo in many applications  \cite[Chapter 5]{glasserman2004Monte}. Brownian bridges and principal components
analysis (PCA) often enhance the efficiency of QMC in the pricing of financial derivatives \cite[Subsection 5.5.2]{glasserman2004Monte}. The randomized dimension reduction method (RDR), a recent variance reduction technique, provably outperforms standard Monte Carlo in certain high-dimensional settings \cite{kahaRandomizedDimensionReduction20}. RDR estimates
\(E(f(U))\),   where the vector       \(U\) consists  of \(d\) independent  random variables, and  \(f\)  is a  function of \(d\) variables. When \(f\) does not depend equally on all its arguments, RDR  provides an unbiased estimator of  \(E(f(U))\) that, under suitable conditions,  outperforms standard Monte Carlo by a factor of order \(d\) \cite{kahaRandomizedDimensionReduction20}. The basic idea behind RDR is to simulate more often the important arguments of \(f\). RDR is related to previous variance reduction techniques such as the algorithm by 
\citet{glasserman2003resource}  that estimates the expectation of the sum of a sequence of random variables by stopping the sequence early.

This paper considers the problem of estimating \(E(g(X))\), where \(g\) is a real-valued function on \(\mathbb{R}^{d}\) such that \(g(X)\) is square-integrable. Let  \(A\) be a \(d\times d\) matrix satisfying \eqref{eq:aat}, and let \(U\)  be a \(d\)-dimensional vector of independent standard Gaussian random variables. Since \(AU\) has the same distribution as \(X\) \cite[Subsection 2.3.3]{glasserman2004Monte}, we have  \(E(g(X))=E(f(U))\), where \(f(z):=g(Az)\) for \(z\in\mathbb{R}^{d}\). In other words, \(E(g(X))\) is equal to the expectation of a functional of \(d\) independent standard Gaussian random variables. The standard Monte Carlo method estimates \(E(f(U))\)  
by taking the average of \(f\) over \(n\) independent copies of \(U\). Once \(A\) is calculated, \(AU\) can be simulated in \(O(d^{2})\) time. This paper uses  RDR and PCA  to provide more efficient estimates of  \(E(g(X))\). The basic idea behind our approach is to select a matrix  \(A\)   satisfying \eqref{eq:aat} so that the importance of the \(i\)-th argument of \(f\) decreases with \(i\). This allows RDR to estimate \(E(f(U))\) more efficiently than standard Monte Carlo. PCA provides a natural choice for such a matrix \(A\). Under certain conditions, we prove that, for the same computational cost, the variance of our estimator is lower than the variance of the standard Monte Carlo estimator by a factor of order \(d\).

PCA is widely used in a range of fields such as finance \cite{jamshidianZhu1996,glasserman2004Monte}, statistics, and machine learning \cite{hastieTibshirani2009elements,huang2022scaled}. Empirical analysis shows that the movements of several Gaussian vectors that arise in finance is largely explained by a small number of factors \cite{jamshidianZhu1996,Hull14}. Throughout the rest of the paper, the running time refers to the number of arithmetic
operations. \section{Reminder on RDR}
This section assumes that the components of the vector       \(U=(U_{1},\ldots,U_{d})\) are \(d\) independent  random variables taking values in a measurable space \(F\), that  \(f:F^{d}\rightarrow\mathbb{R}\)  is  Borel-measurable, and that \(f(U)\)  is square-integrable.  An informal assumption is that the importance of \(U_{i}\) decreases with \(i\). In certain applications, like Markov chains, the arguments of \(f\) are reordered to satisfy this assumption  \cite{kahaRandomizedDimensionReduction20}. The RDR algorithm generates copies of \(U\) by sampling more often the first components of \(U\). It  estimates
\(E(f(U))\) by taking the average of \(f\) over the  copies of \(U\).   More precisely, the generic  RDR   algorithm takes the function \(f\), a positive integer \(n\) and a vector \(q=(q_{0},\dots,q_{d-1})\in\mathbb{R}^{d}\) as parameters, with\begin{equation}\label{eq:condQ}
1=q_{0}\ge q_{1}\ge\cdots\ge q_{d-1}>0.
\end{equation}Let \((N_{k}\)), 
\(k\geq 1\),  be    a sequence of  independent random integers  in \(\{1,\dots,d\}\) such that \(\Pr(N_{k}>i)=q_{i}\) for 
\(k\geq 1\) and  \(i\in\{1,\dots,d\}\).
The algorithm simulates \(n\) copies \(V^{(1)},\dots,V^{(n)}\) of \(U\) along these steps:
\begin{enumerate}
\item 
First iteration.
Generate a copy \(V^{(1)}\) of \(U\) and calculate \(f(V^{(1)})\).

\item Loop. In iteration \(k+1\), where \(1\leq k\leq n-1\), let \(V^{(k+1)}\) be the vector obtained from  \(V^{(k)}\) by re-simulating the first \(N_{k}\) components of \(V^{(k)}\), without altering the remaining components.
Calculate \(f(V^{(k+1)})\).
\item Output \begin{displaymath}
f_{n}:=\frac{f(V^{(1)})+\dots+f(V^{(n)})}{n}.
\end{displaymath}
\end{enumerate}
As observed in \citet{kahaRandomizedDimensionReduction20}, we have 
\begin{equation}\label{eq:expectedfn}
E(f_{n})=E(f(U)).
\end{equation}
For  \(0\leq i\leq d\), let 
\begin{equation*}
C(i)=\cov(f(U),f(U')),
\end{equation*}
where  \(U'=(U'_{1},\ldots ,U'_{i},U_{i+1},\ldots,U_{d})\), and   \(U'_{1},\ldots ,U'_{i}\) are  random variables such that  \(U'_{j}\sim U_{j}\)  for \(1\leq j\leq i\), and   \(U'_{1},\ldots ,U'_{i}\),  \(U\) are independent. In other words, \(U\sim U'\), the first \(i\) components of \(U'\) are independent of \(U\), and the last \(d-i\) components of \(U\) and \(U'\) are the same. Note that \(C(0)=\var(f(U))\) and  \(C(d)=0\).   An intuitive argument  \cite{kahaRandomizedDimensionReduction20} shows that, if the last \(d-i\) arguments of \(f\) are not important,  then    \(C(i)\) should be small. Furthermore,  \citet{kahaRandomizedDimensionReduction20} proves  that  \(C(i)\) is a decreasing function of \(i\),  and that \begin{equation}\label{eq:lemmaVariance}
n\var(f_{n})\le2 \sum^{d-1}_{i=0}\frac{C(i)-C(i+1)}{q_{i}}.
\end{equation}
The expected running time of the RDR algorithm depends on \(q\) as well.   \citet{kahaRandomizedDimensionReduction20}  provides a numerical algorithm to compute a vector \(q\) that optimizes the asymptotic efficiency of the RDR algorithm.  Proposition~\ref{pr:upperBoundingCi}   gives a bound on  \(C(i)\) in terms of an approximation of \(f\)   by a function of its first \(i\) arguments.       
\begin{proposition}[\citet{kahaRandomizedDimensionReduction20}]\label{pr:upperBoundingCi}
For  \(1\leq i\leq d\), if   \(f_{i}:F^{i}\rightarrow\mathbb{R}\)  is  a Borel-measurable function such that \(f_{i}(U_1,\dots,U_i)\) is square-integrable, then 
\begin{equation*}
C(i)\le \var(f(U)-f_{i}(U_1,\dots,U_i)).
\end{equation*}
\end{proposition}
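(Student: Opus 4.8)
The plan is to decompose \(f(U)\) into the part that is already captured by \(f_i\) and a remainder, and then to exploit the independence structure that is built into the definition of \(U'\). Concretely, I would set \(W:=f_i(U_1,\dots,U_i)\), \(R:=f(U)-W\), \(W':=f_i(U'_1,\dots,U'_i)\), and \(R':=f(U')-W'\). Both \(W\) and \(R\) are square-integrable by hypothesis, and since the components of \(U'\) are mutually independent with \(U'_j\sim U_j\) for \(j\le i\) and equal to \(U_j\) for \(j>i\), the vector \(U'\) has the same law as \(U\); hence the pair \((W',R')\) has the same law as \((W,R)\), and in particular \(\var(R')=\var(R)=\var(f(U)-f_i(U_1,\dots,U_i))\).

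Next I would expand the covariance:
\begin{equation*}
C(i)=\cov(f(U),f(U'))=\cov(W,W')+\cov(W,R')+\cov(R,W')+\cov(R,R').
\end{equation*}
Here \(W\) is a function of \((U_1,\dots,U_i)\), \(W'\) is a function of \((U'_1,\dots,U'_i)\), \(R'\) is a function of \((U'_1,\dots,U'_i,U_{i+1},\dots,U_d)\), and \(R\) is a function of \(U\). Because \(U'_1,\dots,U'_i\) are independent of \(U\) and because \((U_1,\dots,U_i)\) is independent of \((U_{i+1},\dots,U_d)\), each of the pairs \((W,W')\), \((W,R')\), \((R,W')\) consists of independent (and square-integrable) random variables, so the first three covariances vanish and \(C(i)=\cov(R,R')\). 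Applying Cauchy--Schwarz and using \(\var(R')=\var(R)\) gives \(C(i)=\cov(R,R')\le\sqrt{\var(R)\var(R')}=\var(R)=\var(f(U)-f_i(U_1,\dots,U_i))\), which is the claimed bound.

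I do not expect a genuine obstacle here: the argument is essentially a variance decomposition plus Cauchy--Schwarz. The only points that need care are (i) verifying that \(U'\) has the same distribution as \(U\), so that \(R'\) and \(R\) are identically distributed with equal variance, and (ii) the bookkeeping of which underlying variables each of \(W,W',R,R'\) depends on, which is what makes the three cross-covariances vanish. Integrability of all the relevant products follows automatically from the square-integrability assumptions on \(f(U)\) and \(f_i(U_1,\dots,U_i)\) via Cauchy--Schwarz, so no additional hypotheses are required.
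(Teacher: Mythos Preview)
Your argument is correct. The decomposition \(f(U)=W+R\), \(f(U')=W'+R'\) together with the mutual independence of \(U_1,\dots,U_d,U'_1,\dots,U'_i\) does kill the three cross terms, and Cauchy--Schwarz with \(\var(R')=\var(R)\) finishes the job. One small point worth making explicit for the reader: the independence of \(W\) and \(R'\) requires that \((U_1,\dots,U_i)\) be independent of the \emph{joint} vector \((U'_1,\dots,U'_i,U_{i+1},\dots,U_d)\), not merely of each block separately; this follows from the mutual independence of all \(d+i\) coordinates but is the step most likely to be glossed over.

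As for comparison with the paper: there is nothing to compare, since the paper does not prove this proposition but quotes it from \citeasnoun{kahaRandomizedDimensionReduction20}. Your self-contained proof is a reasonable supplement.
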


\section{An RDR algorithm for Gaussian simulation}\label{se:main}
Throughout the rest of the paper, assume that \(U=(U_{1},\ldots,U_{d})\)  is a \(d\)-dimensional vector of independent standard Gaussian random variables, that   \(M\) is a  \(d\times d\)   positive semi-definite matrix,  and that  \(A\) is a \(d\times d\) matrix satisfying \eqref{eq:aat}.  Let \(g\) be a real-valued function of \(d\) variables such that \(E(g(X)^{2})\) is finite, where \(X\sim N(0,M)\). We assume that the time to calculate \(g(z)\) is  \(O(d)\)  for \(z\in\mathbb{R}^{d}\).\(\)  Let \(\mu:=E(g(X))\), \(\Sigma:=\var(g(X))\), and  \(f(z):=g(Az)\) for \(z\in\mathbb{R}^{d}\). Thus, \(\mu=E(f(U))\) and  \(\Sigma=\var(f(U))\).  Our goal is to estimate \(\mu\).
\subsection{The algorithm description} Algorithm \ref{alg:RDRGaussian} gives a detailed implementation of the RDR algorithm applied to \(f(U)\). In this case, \(F=\mathbb{R}\). Algorithm \ref{alg:RDRGaussian} takes as parameters the function \(g\), the matrices \(M\) and \(A\), and a vector \(q\in\mathbb{R}^{d}\) satisfying \eqref{eq:condQ}. For simplicity, the number of iterations \(n\) is calculated in Step~2 of Algorithm \ref{alg:RDRGaussian} via the equation
\begin{equation}\label{eq:ndef}
n =\biggl\lceil\frac{d}{\sum_{i=0}^{d-1}q_{i}}\biggr\rceil.
\end{equation}  For \(k\geq 1\), \(I_{k}\) refers to the \(k\times k\) identity matrix.\begin{algorithm}[H]
\caption{The algorithm GRDR}
\label{alg:RDRGaussian}
\begin{algorithmic}[1]
\Function{GRDR}{$g,M, A,q$}
\State \(n \gets\biggl\lceil\frac{d}{\sum_{i=0}^{d-1}q_{i}}\biggr\rceil\)
\State Simulate \(U=(U_{1},\dots,U_{d})\sim N(0,I_{d})\)
\State \(X\gets AU\)
\State \(S\gets g(X)\)
\For{\(k\gets 1,n-1\)} 
\State
Simulate \(N\in\{1,\dots,d\}\) with  \(\Pr(N > i)=q_{i}\) for \(0\leq i\leq d-1\)
\State Simulate \((U'_{1},\dots,U'_{N})\sim N(0,I_{N})\)
\State \(U'\gets(U'_{1},\dots,U'_{N},U_{N+1},\dots,U_{d})\)
\State\(X\gets X+A(U'-U)\)
\State \(S\gets S+ g(X)\)
\State\(U\gets U'\)
\EndFor
\State\Return \(f_{n}\gets S/n\)
\EndFunction
\end{algorithmic}
\end{algorithm}
By \eqref{eq:expectedfn}, we have \(E(f_{n})=\mu\). We now estimate the expected running time \(T\) of Algorithm~\ref{alg:RDRGaussian}. Steps 2 through 5 are performed in \(O(d^{2})\) time. In each iteration of the loop, the last \(d-N\) components of \(U'-U\) are null. Thus, the calculation of \(A(U'-U)\) and Step 10 can be performed in \(O(dN)\) time. The running time of the remaining steps in the loop is \(O(d+N)\).  As \(N=\sum_{i=0}^{d-1}1\{N>i\}\), we have\begin{equation*}E(N)=\sum_{i=0}^{d-1}q_{i}.\end{equation*} Thus\begin{eqnarray*}T&\leq& c(d^{2}+(n-1)E(dN))\\
&=&c(d^{2}+(n-1)d(\sum_{i=0}^{d-1}q_{i})),
\end{eqnarray*} for some constant \(c\). By \eqref{eq:ndef}, it follows that \(T\leq 2cd^{2}\).
Hence, for a general matrix \(A\), the expected time to simulate \(f_{n}\) is of the same order of magnitude as the time to  simulate \(U\) and calculate \(g(AU)\).
\subsection{Standard Monte Carlo versus GRDR} 
Given \(\epsilon>0\), \(n'=\lceil \Sigma^{2}/\epsilon^{2}\rceil\) independent samples of \(X\sim N(0,M)\) are needed so that the variance of the average of \(n'\) independent copies of \(g(X)\) is at most \(\epsilon^{2}\). Calculating the Cholesky decomposition and simulating these \(n'\) copies takes \begin{equation*}
\tau_{MC}(\epsilon)=\Theta(d^{3}+\frac{\Sigma ^{2}}{\epsilon^{2}}d^{2})
\end{equation*}
time, as observed in \cite{kahaleGaussian2019}. Similarly, the expected time needed  to estimate \(\mu\) with precision \(\epsilon\) by averaging independent copies of  \(f_{n}\) is \begin{equation*}
\tau_{GRDR}(\epsilon)=\Theta(d^{3}+\frac{ \var(f_{n})}{\epsilon^{2}}d^{2}).
\end{equation*}
Thus, as \(\epsilon\) goes to \(0\), the performance of  GRDR versus standard Monte Carlo depends on the ratio  \( \var(f_{n})/\Sigma\). Combining \eqref{eq:lemmaVariance} and \eqref{eq:ndef} shows that  \begin{equation}\label{eq:VarianceBoundfn}
\var(f_{n})\le\frac{2}{d} \left(\sum_{i=0}^{d-1}q_{i}\right)\left(\sum^{d-1}_{i=0}\frac{C(i)-C(i+1)}{q_{i}}\right).
\end{equation}
Given \(M\) and \(g\), the RHS of  \eqref{eq:VarianceBoundfn} depends on the choice of \(A\) and \(q\). This is because the \(C(i)\)'s depend on \(f\) which, by  definition, depends on \(A\). Given \(q\), the RHS of  \eqref{eq:VarianceBoundfn}  is a linear combination of the \(C(i)\)'s with non-negative weights, and is therefore    an increasing function of \(C(i)\), for  \(0\leq i\leq d\). 

Motivated by \cite[Proposition 6]{kahaRandomizedDimensionReduction20}, we examine the case where \(q_{i}=1/(i+1)\) for \(0\leq i\leq d-1\). By \eqref{eq:VarianceBoundfn} and the inequality  \(\sum_{i=2}^{d}1/i\le\ln d\),
\begin{eqnarray}\label{eq:varfnharmonic}\var(f_{n})&\le&\frac{2(1+\ln d)}{d}\sum^{d-1}_{i=0}(i+1)(C(i)-C(i+1))\nonumber\\&=&\frac{2(1+\ln d)}{d}\sum^{d-1}_{i=0}C(i).
\end{eqnarray}
As \(C(0)=\Sigma\) and \(C(i)\) decreases with \(i\), it follows that \begin{displaymath}
\var(f_{n})\le2(1+\ln d)\Sigma.
\end{displaymath}In other words, the variance of \(f_{n}\) is always upper bounded by \(\Sigma\), up to a logarithmic factor. On the other hand, if \(C(i)\) decreases rapidly with \(i\),  \(\var(f_{n})\) can be much smaller than  \(\Sigma\). For instance, if \(C(i)\le c'\Sigma/i\), for some constant \(c'\) and \(1\leq i\leq d-1\), then \eqref{eq:varfnharmonic} implies that  \(\var(f_{n})\) is smaller than  \(\Sigma\) by a factor of order \(d/\ln^{2}(d)\). Under an additional assumption on \(g\), Section~\ref{se:BoundingCi} provides explicit bounds on the \(C(i)\)'s and on \(\var(f_{n})\) and studies another choice of \(q\). \section{Explicit bounds on \(\var(f_{n})\)}\label{se:BoundingCi}
For \(d\times d\) matrices \(B\) and \(B'\), we say that \(B\preccurlyeq B'\) if \(B'-B\) is positive semi-definite.  Let  \(\kappa>0\). Following  \citet{kahaleGaussian2019}, we say that \(g\) is \((\kappa,M)\)-Lipschitz
if, for any \((2d)\)-dimensional centered Gaussian vector \((Y,Y')\) with \(\cov(Y)\preccurlyeq M\) and \(\cov(Y')\preccurlyeq M\),  
\begin{displaymath}
E((g(Y)-g(Y'))^{2})\leq \kappa ^{2}E(||Y-Y'||^{2}).
\end{displaymath}  
In particular, any  \(\kappa \)-Lipschitz function is also  \((\kappa,M)\)-Lipschitz. Subsection~\ref{sub:Basket} gives an example of a  \((\kappa,M)\)-Lipschitz function that is not Lipschitz. This section assumes that   
\(g\)  is  \((\kappa,M)\)-Lipschitz and, for simplicity,  that \(M\) is positive definite. Lemma \ref{le:LipschitzBarC} below gives bounds on the \(C(i)\)'s under these assumptions.  
\begin{lemma}\label{le:LipschitzBarC}For \(0\leq i\leq d-1\), we have    \begin{equation*}
C(i)\leq\kappa ^{2} \sum ^{d}_{j=i+1}(A^{T}A)_{jj}.
\end{equation*}
 \end{lemma}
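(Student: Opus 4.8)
\emph{Proof proposal.} The plan is to invoke Proposition~\ref{pr:upperBoundingCi} with the ``truncated'' approximation of $f$ obtained by zeroing out the last $d-i$ coordinates. Fix $i$ with $1\le i\le d-1$, let $D_i$ be the $d\times d$ diagonal matrix with $(D_i)_{jj}=1$ for $j\le i$ and $(D_i)_{jj}=0$ for $j>i$, put $\tilde U:=D_iU=(U_1,\dots,U_i,0,\dots,0)$, and define $f_i:\mathbb{R}^i\to\mathbb{R}$ by $f_i(u_1,\dots,u_i):=g\bigl(A(u_1,\dots,u_i,0,\dots,0)\bigr)$, so that $f_i(U_1,\dots,U_i)=g(A\tilde U)$.

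First I would record the two facts needed to apply the $(\kappa,M)$-Lipschitz hypothesis. The pair $(AU,A\tilde U)$ is a linear image of the centered Gaussian vector $U$, hence is itself a centered Gaussian vector; and its two blocks satisfy $\cov(AU)=AA^T=M$ and $\cov(A\tilde U)=AD_iA^T$, with $AD_iA^T\preccurlyeq AA^T=M$ because $I_d-D_i$ is positive semi-definite. In particular, applying the $(\kappa,M)$-Lipschitz bound to the pair $(A\tilde U,0)$ shows that $g(A\tilde U)$ is square-integrable, so $f_i$ is an admissible choice in Proposition~\ref{pr:upperBoundingCi}.

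Then, combining Proposition~\ref{pr:upperBoundingCi}, the inequality $\var(Z)\le E(Z^2)$, and the $(\kappa,M)$-Lipschitz property applied to $(AU,A\tilde U)$,
\begin{align*}
C(i)&\le\var\bigl(f(U)-f_i(U_1,\dots,U_i)\bigr)\\
&\le E\bigl((g(AU)-g(A\tilde U))^2\bigr)\\
&\le\kappa^2\,E\bigl(\|A(U-\tilde U)\|^2\bigr).
\end{align*}
It then remains to evaluate the last expectation: since $U-\tilde U=(I_d-D_i)U$ is centered with covariance $I_d-D_i$, we get $E\bigl(\|A(U-\tilde U)\|^2\bigr)=\tr\bigl(A(I_d-D_i)A^T\bigr)=\tr\bigl((I_d-D_i)A^TA\bigr)=\sum_{j=i+1}^d(A^TA)_{jj}$, which is the claimed bound. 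The boundary case $i=0$ is identical, with $\tilde U=0$ and $f_i$ the constant $g(0)$, and follows directly from $C(0)=\var(g(AU))\le E\bigl((g(AU)-g(0))^2\bigr)\le\kappa^2\tr(M)=\kappa^2\sum_{j=1}^d(A^TA)_{jj}$.

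I do not expect a genuine obstacle here: the only real decision is the choice of the truncated function $f_i$, and once it is made the argument reduces to a Löwner-order comparison of covariance matrices and a one-line variance computation for a linear combination of independent standard Gaussians. The single point requiring a little care is verifying $AD_iA^T\preccurlyeq M$, so that the $(\kappa,M)$-Lipschitz definition legitimately applies to the vector $A\tilde U$.
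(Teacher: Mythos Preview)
Your proposal is correct and follows essentially the same route as the paper: both choose the truncated approximation $f_i(u_1,\dots,u_i)=g\bigl(A(u_1,\dots,u_i,0,\dots,0)\bigr)$, verify via the L\"owner inequality $AD_iA^T\preccurlyeq M$ that the $(\kappa,M)$-Lipschitz hypothesis applies to the pair $(AU,AD_iU)$, and then reduce to the trace computation $E\|A(I_d-D_i)U\|^2=\sum_{j>i}(A^TA)_{jj}$. Your write-up is in fact slightly more careful than the paper's in making explicit the step $\var(Z)\le E(Z^2)$ and the square-integrability of $f_i(U_1,\dots,U_i)$.
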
\begin{proof}
Let \(P\) be the   \(d\times d\) diagonal matrix whose first \(i\) diagonal elements are \(1\) and  remaining entries are \(0\). Let  \(U'=PU\), \(Y=AU\) and \(Y'=AU'\). Then  \(Y\sim N(0,M)\). As    \(Y'=APU\), \((Y,Y')\) is a centered Gaussian vector and  \begin{eqnarray*}\cov(Y')&=&E(Y'Y'^{T})\\&=&APE(UU^{T})PA^{T}\\
&=&APA^{T}\\
&\preccurlyeq&AA^{T},\end{eqnarray*}where the third equation follows from the equalities \(E(UU^{T})=I_{d}\) and \(P^{2}=P\), and the last equation from the equality \(AA^{T}-APA^{T}=A(I_d-P)A^{T}\) and the inequality \(P\preccurlyeq I_{d}\). Thus \(\cov(Y')\preccurlyeq M\). Furthermore, \(Y-Y'=A\Gamma\), where \(\Gamma:=U-U'\). Hence \begin{eqnarray*}
C(i)
&\le&E((f(U)-f(U'))^{2})\\
&=&E((g(Y)-g(Y'))^{2})\\
&\le&\kappa^{2}E(||A\Gamma||^{2})\\
&=&\kappa^{2}E(\Gamma^{T}A^{T}A\Gamma)\\
&=&\kappa^{2}\sum ^{d}_{j=1}\sum ^{d}_{k=1}(A^{T}A)_{jk}E(\Gamma_{j}\Gamma_{k})\\
&=&\kappa^{2}\sum ^{d}_{j=i+1}(A^{T}A)_{jj}.
\end{eqnarray*}The first equation follows from Proposition \ref{pr:upperBoundingCi}, and the last one from the fact that \(\Gamma\) has a normal distribution with covariance matrix \(I_d-P\). \end{proof}

When \(i=0\), Lemma \ref{le:LipschitzBarC} implies that
\begin{eqnarray}\label{eq:Varg(X)Bound}
\Sigma&\leq&\kappa ^{2}\tr(A^{T}A)\nonumber\\
&=&\kappa ^{2}\tr(AA^{T})\nonumber\\
&=&\kappa ^{2}\tr(M).
\end{eqnarray}
\subsection{Choosing \(q\) in terms of \(A\)}
We first show  that, without loss of generality, we can assume that the sequence \((A^{T}A)_{ii}\), \(1\leq i\leq d\), is decreasing. Consider a permutation \(\pi\) of \(\{1,\dots,d\}\) such that the sequence \((A^{T}A)_{\pi(i)\pi(i)}\), \(1\leq i\leq d\), is decreasing. Let \(B\) be the \(d\times d\) matrix defined by \(B_{ij}=A_{i\pi(j)}\), for \(1\leq i,j\leq d\).   
Then a standard calculation shows that  \(BB^{T}=AA^{T}=M\) and \((B^{T}B)_{ii}=(A^{T}A)_{\pi(i)\pi(i)}\) for \(1\leq i\leq d\). Thus  \(B\) satisfies  \eqref{eq:aat} and the sequence \((B^{T}B)_{ii}\), \(1\leq i\leq d\), is decreasing. 

 Assume now that \(A\) satisfies  \eqref{eq:aat} and that the sequence \((A^{T}A)_{ii}\), \(1\leq i\leq d\), is decreasing. The discussion following   \eqref{eq:VarianceBoundfn} shows that   \eqref{eq:VarianceBoundfn} remains valid if the \(C(i)\)'s are replaced with their upper-bounds implied by Lemma \ref{le:LipschitzBarC}. Consequently,\begin{equation*}
\var(f_{n})\le\frac{2\kappa^{2}}{d}\biggl(\sum_{i=1}^{d}q_{i-1}\biggr) \sum^{d}_{i=1}\frac{(A^{T}A)_{ii}}{q_{i-1}}.
\end{equation*}
On the other hand, by    \eqref{eq:aat}, \(\det(A^{T}A)=\det M\).  Thus, as \(M\) is invertible, so is \(A^{T}A\). Consequently, \(A^{T}A\) is positive definite and \((A^{T}A)_{ii}>0\) for \(1\leq i \leq d\).   A standard calculation then shows that the RHS  is minimized when\begin{equation}\label{eq:optqGen}
q_{i-1} = \sqrt{\frac{(A^{T}A)_{ii}}{(A^{T}A)_{11}}},
\end{equation} for \(1\leq i\leq d\). Another justification for  \eqref{eq:optqGen} follows from \cite[Proposition 4]{kahaRandomizedDimensionReduction20}. For this choice of \(q\), we have   
\begin{equation}\label{eq:varfnkappaGen}
\var(f_{n})\le\frac{2\kappa^{2}}{d}\biggl(\sum_{i=1}^{d} \sqrt{(A^{T}A)_{ii}}\biggr)^{2}.
\end{equation}This yields Theorem~\ref{th:OptqGenA} below.
\begin{theorem}\label{th:OptqGenA}Assume that \(M\) is positive definite, that  \(A\) satisfies  \eqref{eq:aat}, that  the sequence \((A^{T}A)_{ii}\), \(1\leq i\leq d\), is decreasing, and that \(g\) is \((\kappa,M)\)-Lipschitz. If  \(q\) is given by \eqref{eq:optqGen} then \(T\leq 2cd^{2}\) for some constant \(c\) and \(\var(f_{n})\) satisfies \eqref{eq:varfnkappaGen}.\end{theorem}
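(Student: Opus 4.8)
The plan is to assemble the theorem from the pieces already established in Section~\ref{se:main} and in Lemma~\ref{le:LipschitzBarC}, so the argument will be short. First I would check that the vector $q$ defined by \eqref{eq:optqGen} is an admissible parameter, i.e.\ that it obeys \eqref{eq:condQ}. Taking $i=1$ gives $q_0=1$. Since $M$ is positive definite, \eqref{eq:aat} gives $\det(A^{T}A)=\det M\neq 0$, so $A^{T}A$ is positive definite; in particular $(A^{T}A)_{ii}>0$ for every $i$, so each $q_{i-1}$ is a well-defined positive number, and $q_{d-1}>0$. Because the sequence $(A^{T}A)_{ii}$, $1\le i\le d$, is assumed decreasing, $i\mapsto q_{i-1}$ is decreasing as well, which yields the full chain $1=q_0\ge q_1\ge\cdots\ge q_{d-1}>0$.

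Next I would dispose of the running-time claim: the bound $T\le 2cd^{2}$ was derived in Section~\ref{se:main} for \emph{any} choice of $q$ satisfying \eqref{eq:condQ} together with the choice of $n$ in \eqref{eq:ndef}; since the present $q$ satisfies \eqref{eq:condQ}, that bound applies verbatim. So only the variance bound \eqref{eq:varfnkappaGen} remains.

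For the variance, I would start from \eqref{eq:VarianceBoundfn}. Given $q$, its right-hand side is a non-negative linear combination of $C(0),\dots,C(d)$, hence nondecreasing in each $C(i)$; combining this with the bounds $C(i)\le\kappa^{2}\sum_{j=i+1}^{d}(A^{T}A)_{jj}$ from Lemma~\ref{le:LipschitzBarC}, one may replace each $C(i)$ in \eqref{eq:VarianceBoundfn} by its upper bound without decreasing the right-hand side. The telescoping identity $\sum_{j=i+1}^{d}(A^{T}A)_{jj}-\sum_{j=i+2}^{d}(A^{T}A)_{jj}=(A^{T}A)_{i+1,i+1}$ then turns \eqref{eq:VarianceBoundfn} into
\[
\var(f_{n})\le\frac{2\kappa^{2}}{d}\Bigl(\sum_{i=1}^{d}q_{i-1}\Bigr)\Bigl(\sum_{i=1}^{d}\frac{(A^{T}A)_{ii}}{q_{i-1}}\Bigr).
\]
Substituting $q_{i-1}=\sqrt{(A^{T}A)_{ii}/(A^{T}A)_{11}}$, the first factor equals $(A^{T}A)_{11}^{-1/2}\sum_{i=1}^{d}\sqrt{(A^{T}A)_{ii}}$ and the second equals $(A^{T}A)_{11}^{1/2}\sum_{i=1}^{d}\sqrt{(A^{T}A)_{ii}}$, so their product is $\bigl(\sum_{i=1}^{d}\sqrt{(A^{T}A)_{ii}}\bigr)^{2}$, giving \eqref{eq:varfnkappaGen}. (One could additionally record that, by Cauchy--Schwarz, this $q$ minimises the displayed right-hand side subject to $q_0=1$, which is the ``standard calculation'' referred to before \eqref{eq:optqGen}, but this is not needed for the statement.)

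I do not expect a genuine obstacle here. The only two points needing a little care are the verification that $q_{d-1}>0$ — this is exactly where positive definiteness of $M$ is used — and the justification that the Lipschitz bounds of Lemma~\ref{le:LipschitzBarC} may be substituted for the $C(i)$'s in \eqref{eq:VarianceBoundfn}, which rests on monotonicity of its right-hand side in each $C(i)$. The rest is the telescoping identity and the one-line substitution above.
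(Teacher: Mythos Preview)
Your proof is correct and follows essentially the same route as the paper: the paper also verifies $(A^{T}A)_{ii}>0$ via positive definiteness of $M$, invokes the monotonicity of the right-hand side of \eqref{eq:VarianceBoundfn} in the $C(i)$'s to substitute the bounds of Lemma~\ref{le:LipschitzBarC}, obtains the displayed product inequality, and then specialises to the $q$ of \eqref{eq:optqGen}; the running-time bound is likewise quoted from Section~\ref{se:main}. Your write-up is slightly more explicit about checking \eqref{eq:condQ} and about the telescoping step, but there is no substantive difference.
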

By the Cauchy-Schwartz inequality, \begin{eqnarray*}
\biggl(\sum_{i=1}^{d} \sqrt{(A^{T}A)_{ii}}\biggr)^{2}&\le&d\sum_{i=1}^{d} (A^{T}A)_{ii}\\&=& d\tr(A^{T}A)\\&=& d\tr(M).
\end{eqnarray*}
Thus, the upper bound on  \(\var(f_{n})\) given by  \eqref{eq:varfnkappaGen} is, up to a multiplicative factor of \(2\), no worse than the upper bound on  \(\Sigma\) given by \eqref{eq:Varg(X)Bound}. On the other hand, if \((A^{T}A)_{ii}\) decreases rapidly with \(i\), the bound on  \(\var(f_{n})\) given by  \eqref{eq:varfnkappaGen} can be much smaller than the bound on  \(\Sigma\) given by \eqref{eq:Varg(X)Bound}. In particular, if \((A^{T}A)_{ii}\le c'i^{-2} (A^{T}A)_{11}\) for some constant \(c'\) independent of \(d\) and \(1 \le i\le d\), then the RHS of  \eqref{eq:varfnkappaGen} is smaller than that of  \eqref{eq:Varg(X)Bound} by a factor of order \(d/\ln^{2}d\). Similarly, if  \((A^{T}A)_{ii}\le c'i^{\gamma} (A^{T}A)_{11}\)  for some constants \(c'\) and \(\gamma<-2\) independent of \(d\) and \(1 \le i\le d\), then the RHS of  \eqref{eq:varfnkappaGen} is smaller than that of  \eqref{eq:Varg(X)Bound} by a factor of order \(d\). 
\subsection{Combining RDR and PCA}
    Let
\(\lambda_{1},\dots,\lambda_{d}\) be the eigenvalues of  \(M\) in decreasing order. Denote by  \(\Lambda\)  the   \(d\times d\) matrix diagonal matrix  whose \(i\)-th diagonal entry is \(\lambda_{i}\). Thus, there is an orthogonal \(d\times d\) matrix \(Q\) such that \(M=Q\Lambda Q^{T}\). The matrices \(Q\) and \(\Lambda\) can be  approximately computed in \(O(d^{3})\) time \cite[Chapter 8]{golub2013matrix}.   Theorem \ref{th:sqrtlamdai} combines the RDR method with PCA.  
  
\begin{theorem}\label{th:sqrtlamdai}
  Assume that 
\(g\)  is  \((\kappa,M)\)-Lipschitz. Set \(A=Q\sqrt{\Lambda}\) and \(q_{i}=\sqrt{\lambda_{i+1}/\lambda_{1}}\) for \(0\leq i\leq d-1\).   Then \(T\leq 2cd^{2}\) for some constant \(c\) and \begin{equation}\label{eq:lambda}
\var(f_{n})\leq\frac{2\kappa^{2}}{d}\left(\sum_{i=1}^{d}\sqrt{\lambda_{i}}\right)^{2}. 
\end{equation} 
\end{theorem}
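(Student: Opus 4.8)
The plan is to derive Theorem~\ref{th:sqrtlamdai} as a direct specialization of Theorem~\ref{th:OptqGenA}, the only genuine work being a short linear-algebra identity for \(A^{T}A\). First I would record that \(A=Q\sqrt{\Lambda}\) satisfies \eqref{eq:aat}: since \(Q\) is orthogonal and \(\sqrt{\Lambda}\) is diagonal, hence symmetric, \(AA^{T}=Q\sqrt{\Lambda}\sqrt{\Lambda}Q^{T}=Q\Lambda Q^{T}=M\). Next, and this is the key computation, \(A^{T}A=\sqrt{\Lambda}Q^{T}Q\sqrt{\Lambda}=\sqrt{\Lambda}\sqrt{\Lambda}=\Lambda\), so \((A^{T}A)_{ii}=\lambda_{i}\) for \(1\le i\le d\).

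Since \(\lambda_{1}\ge\cdots\ge\lambda_{d}\) by construction, the sequence \((A^{T}A)_{ii}\) is decreasing, which is the structural hypothesis of Theorem~\ref{th:OptqGenA}. Recall also that \(M\) is positive definite throughout this section, so \(\lambda_{d}>0\); hence the prescribed vector \(q\) satisfies \eqref{eq:condQ}, in particular \(q_{d-1}>0\), and since \(\lambda_{1}>0\) we have \(q_{0}=\sqrt{\lambda_{1}/\lambda_{1}}=1\), with the \(q_{i}\) decreasing because the square root is monotone. Then I would observe that this \(q\) is precisely the optimal vector \eqref{eq:optqGen} for our \(A\): for \(1\le i\le d\),
\[
\sqrt{\frac{(A^{T}A)_{ii}}{(A^{T}A)_{11}}}=\sqrt{\frac{\lambda_{i}}{\lambda_{1}}}=q_{i-1}.
\]

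With these verifications in place, Theorem~\ref{th:OptqGenA} applies without modification: it yields \(T\le 2cd^{2}\) for some constant \(c\), together with the bound \eqref{eq:varfnkappaGen}, which in the present notation reads
\[
\var(f_{n})\le\frac{2\kappa^{2}}{d}\biggl(\sum_{i=1}^{d}\sqrt{(A^{T}A)_{ii}}\biggr)^{2}=\frac{2\kappa^{2}}{d}\biggl(\sum_{i=1}^{d}\sqrt{\lambda_{i}}\biggr)^{2},
\]
which is exactly \eqref{eq:lambda}. I do not expect a real obstacle here: the argument rests entirely on the identity \(A^{T}A=\Lambda\), and the only point requiring care is to confirm that the hypotheses of Theorem~\ref{th:OptqGenA} (monotonicity of the diagonal of \(A^{T}A\) and validity of \eqref{eq:condQ}) are met, so that the earlier machinery—\eqref{eq:VarianceBoundfn}, Lemma~\ref{le:LipschitzBarC}, and the optimization leading to \eqref{eq:optqGen}—can be invoked verbatim.
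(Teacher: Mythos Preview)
Your proposal is correct and follows exactly the same approach as the paper's proof: verify that \(AA^{T}=M\), compute \(A^{T}A=\Lambda\) so that \((A^{T}A)_{ii}=\lambda_{i}\) is decreasing, and then invoke Theorem~\ref{th:OptqGenA}. The extra checks you include (that \(q\) satisfies \eqref{eq:condQ} and coincides with \eqref{eq:optqGen}) are fine but not strictly needed beyond what Theorem~\ref{th:OptqGenA} already handles.
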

\begin{proof}
  Note that  \(A\) satisfies  \eqref{eq:aat}. On the other hand, since \(A^{T}A=\Lambda\), we have  \((A^{T}A)_{ii}=\lambda_{i}\) for  \(1\leq i \leq d\). Thus the sequence \((A^{T}A)_{ii}\), \(1\leq i\leq d\), is decreasing. Applying Theorem~\ref{th:OptqGenA} concludes the proof.\end{proof}
The discussion following Theorem~\ref{th:OptqGenA} shows that, if \(\lambda_{i}\le c'i^{-2} \lambda_{1}\) for some constant \(c'\) independent of \(d\) and \(1 \le i\le d\), the RHS of  \eqref{eq:lambda} is smaller than that of  \eqref{eq:Varg(X)Bound} by a factor of order \(d/\ln^{2}(d)\).
Similarly, if  \(\lambda_{i}\le c'i^{\gamma} d\) for some constants \(c'\) and \(\gamma<-2\) independent of \(d\) and \(1 \le i\le d\), then the RHS of  \eqref{eq:lambda} is smaller than that of  \eqref{eq:Varg(X)Bound} by a factor of order \(d\).

\subsection{A Basket option}\label{sub:Basket}   Consider   \(d\) stocks \(S_{1},\dots,S_{d}\) that  follow the standard log-normal model~\cite[Subsection 3.2.3]{glasserman2004Monte} and pay no dividends. Assume that \(S_{1}(0)=S_{2}(0)=\dots=S_{d}(0)=1\), where \(S_{i}(t)\) denotes the price of \(S_{i}\) at time \(t\). Let \(r\) be the risk-free rate, and let \(\sigma_{i}\) denote the volatility of \(S_{i}\). Assume that the \(\sigma_{i}\)'s are bounded by a constant independent of \(d\).  A Basket call option with  strike \(K\) and maturity \(t\) is an option with a payoff of
\(((S_{1}(t)+\cdots+ S_{d}(t))/d-K)^{+}\)
at time \(t\). The price of the option is equal to \(E(g(X))\),
where, for \(x=(x_{1},\dots,x_{d})\in\mathbb{R}^{d}\), \begin{equation*}
g(x)=(\frac{1}{d}\sum^{d}_{i=1}\exp(-\frac{\sigma_{i} ^{2}}{2}t+\sigma_{i}\sqrt{t} x_{i})-Ke^{-rT})^{+}.
\end{equation*}
The covariance matrix \(M\) of \(X\) is given by \(M_{ij}=\text{Correl}(\ln(S_{i}(t)),\ln(S_{j}(t)))\)
for \(1\leq i\leq j\leq d\). \citet{kahaleGaussian2019}  studies this example in the context of approximate simulation of Gaussian vectors without using matrix decomposition and shows that \(\kappa=O(d^{-1/2})\), where the constant behind the \(O\) notation does not depend on \(d\).
Since \(\sum_{i=1}^d \lambda_{i}=\tr(M)=d\), we have  \(\lambda_{1}\leq d.\)   If \(\lambda_{i}\le c'i^{-2} \lambda_{1}\) for some constant \(c'\) independent of \(d\) and \(1 \le i\le d\), then   \eqref{eq:lambda} implies that \(\var(f_{n})=O(\ln^{2}(d)/d)\), and \(
\tau_{GRDR}(\epsilon)=\Theta(d^{3}+{\ln^{2}(d)d}{\epsilon^{-2}})\) for \(\epsilon>0\).
Similarly, if  \(\lambda_{i}\le c'i^{\gamma} \lambda_{1}\) for some constants \(c'\) and \(\gamma<-2\) independent of \(d\) and \(1 \le i\le d\), then   \(\var(f_{n})=O(1/d)\), and \(
\tau_{GRDR}(\epsilon)=\Theta(d^{3}+{d}{\epsilon^{-2}})\). In contrast, if \(\Sigma\) is of order \(1\), then  \(
\tau_{MC}(\epsilon)=\Theta(d^{3}+{d^{2}}{\epsilon^{-2}})\).
%\bibliographystyle{nature}
%\bibliographystyle{dcu}
%\bibliographystyle{cparalleles}
%\bibliographystyle{siam}
%\bibliographystyle{plain}
%\bibliographystyle{agsm}
%\bibliographystyle{jmr}
%\bibliographystyle{jPhysicsB}
%\bibliographystyle{kluwer}
%\bibliographystyle{nederlands}
%\renewcommand{\harvardand}{AND}
%\harvardparenthesis{none}
\bibliography{poly}
\end{document}